\def\makeCal#1{%
\expandafter\newcommand\csname c#1\endcsname{\mathcal{#1}}}
\def\makeBB#1{%
\expandafter\newcommand\csname b#1\endcsname{\mathbb{#1}}}
\def\makeFrak#1{%
\expandafter\newcommand\csname f#1\endcsname{\mathfrak{#1}}}	
\edef\y{\@Alph\count@}%
\numberwithin{equation}{section}
\theoremstyle{plain}
\newtheorem{theorem}{Theorem}[section]
\newtheorem{lem}[theorem]{Lemma}
\newtheorem{prop}[theorem]{Proposition}
\theoremstyle{definition}
\newtheorem{defn}[theorem]{Definition}
\theoremstyle{remark}
\newtheorem{remark}[theorem]{Remark}
\DeclareMathOperator{\Spec}{Spec}
\DeclareMathOperator{\ST}{ST}
\DeclareMathOperator{\GL}{GL}
\DeclareMathOperator{\Aut}{Aut}
\DeclareMathOperator{\Ext}{Ext}
\DeclareMathOperator{\Norm}{Norm}
\newcommand{\colim}{\varprojlim}
\newcommand{\ilim}{\varprojlim}
\newcommand{\iso}{\stackrel{\sim}{\to}}
\newcommand{\co}{\colon}
\newcommand{\oh}{\cO}
\newcommand{\fm}{\mathfrak{m}}
\DeclareMathOperator{\Isom}{Isom}
\DeclareMathOperator{\Map}{Map}
\begin{document}
\title{Cartan--Iwahori--Matsumoto decompositions for reductive groups}
\author[J.~Alper]{Jarod Alper}
\address{Department of Mathematics \\ University of Washington \\ Box 354350 \\ Seattle, WA 98195-4350, USA}
\email{jarod@uw.edu}
\author[D.~Halpern-Leistner]{Daniel Halpern-Leistner}
\address{Malott Hall \\ Mathematics Dept \\
Cornell University  \\
Ithaca, NY 14853, USA}
\email{daniel.hl@cornell.edu}
\author[J.~Heinloth]{Jochen Heinloth}
\address{Universit\"at Duisburg-Essen \\
Fakult\"at f\"ur Mathematik \\
45117 Essen, Germany}
\email{jochen.heinloth@uni-due.de}
\begin{abstract}
We provide a short and self-contained argument for the existence of Cartan--Iwahori--Matsumoto decompositions for reductive groups.
\end{abstract}
\maketitle

\section{Introduction}

The following classical theorem on the existence of Cartan--Iwahori--Matsumoto decompositions for reductive groups is the key algebraic input in the proof of the Hilbert--Mumford criterion in geometric invariant theory (see \cite[p.52]{git}):

\begin{theorem} \label{T:theorem1} 
{Let $R$ be a complete DVR with fraction field $K$ and algebraically closed residue field $k$. Let $G$ be a reductive group scheme over $R$.   For any element $g \in G(K)$, there exists elements $h_1, h_2 \in G(R)$ and a one-parameter subgroup $\lambda\co \bG_{m,R} \to G$ such that $g= h_1\lambda|_{K} h_2$.}
\end{theorem}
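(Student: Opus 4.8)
The plan is to reinterpret the desired decomposition as an extension problem for $G$-torsors over the S-completeness test stack $\ST_R = [\Spec(R[s,t]/(st-\pi))/\bG_m]$, where $\pi$ is a uniformizer and $\bG_m$ acts with weights $+1$ and $-1$ on $s$ and $t$. Its underlying ring $R[s,t]/(st-\pi)$ is a regular two-dimensional ring, and $\ST_R$ is covered by the two open substacks $U_s=\{s\neq 0\}$ and $U_t=\{t\neq 0\}$, each canonically isomorphic to $\Spec(R)$, which overlap in the generic fibre $\Spec(K)$; the complement of these two opens is the single closed point $x_0 = B\bG_m$ lying over the closed point of $\Spec(R)$. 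First I would record the dictionary: gluing two trivial $G$-torsors on $U_s$ and $U_t$ along $g\in G(K)$ produces a $G$-torsor $\cP_g$ on $\ST_R\setminus\{x_0\}$, and altering the two trivializations changes $g$ precisely by left and right multiplication by $G(R)$. Thus isomorphism classes of $G$-torsors on $\ST_R\setminus\{x_0\}$ are exactly the double cosets $G(R)\backslash G(K)/G(R)$.

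Next I would produce, for any one-parameter subgroup $\lambda\co\bG_m\to G$, a ``standard'' $G$-torsor $\cP_\lambda$ on all of $\ST_R$ whose transition function across $U_s$ and $U_t$ is the image $\lambda(\pi)\in G(K)$ of the uniformizer, and check that $\cP_\lambda$ restricts over $x_0$ to the $G$-torsor on $B\bG_m$ classified by $\lambda$. The theorem would then follow once I establish two things: (i) every $G$-torsor on $\ST_R\setminus\{x_0\}$ extends to a $G$-torsor on $\ST_R$; and (ii) every $G$-torsor on $\ST_R$ is isomorphic to some $\cP_\lambda$. For (ii) I would restrict the torsor to the closed point to read off a homomorphism $\bG_m\to G$, i.e. a cocharacter $\lambda$ up to conjugacy, and then argue that the torsor is rigid along the infinitesimal thickenings of $x_0$, so that it must agree with $\cP_\lambda$. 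Combined with (i), any $\cP_g$ becomes isomorphic to some $\cP_\lambda$, and reading off transition functions on the two charts yields $g=h_1\lambda(\pi)h_2$ with $h_1,h_2\in G(R)$; here I would use that $G$-torsors on $\Spec(R)$ are trivial, since $R$ is a complete DVR with algebraically closed residue field and $G$ is smooth with connected reductive fibres.

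The heart of the argument --- and the step I expect to be the main obstacle --- is the extension statement (i), the S-completeness of $BG$, since this is exactly where reductivity must be used. My plan is to fix a closed embedding $G\hookrightarrow\GL_n$ and proceed in two stages. The vector bundle associated to $\cP_g$ extends across the codimension-two point $x_0$ by a Hartogs/reflexive-hull argument, carried out $\bG_m$-equivariantly on the regular affine cover; this settles the case $G=\GL_n$. To descend the structure group back to $G$, I would use that the quotient $\GL_n/G$ is affine (Matsushima's criterion, valid because $G$ is reductive), so that a reduction of the extended $\GL_n$-torsor to $G$ is a section of an affine morphism over $\ST_R$; such a section, given over $\ST_R\setminus\{x_0\}$, extends over the codimension-two point $x_0$ by normality. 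The delicate points I anticipate are making these Hartogs-type extensions work equivariantly on the stack $\ST_R$ rather than on a scheme, and checking that the extended reduction is again a $G$-torsor; both should reduce to statements about equivariant reflexive sheaves on the regular surface $\Spec(R[s,t]/(st-\pi))$ away from a codimension-two point.
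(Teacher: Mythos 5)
Your proposal is correct and follows essentially the same route as the paper: glue trivial torsors along $g$ over the punctured $\overline{\ST}_R$, extend across the codimension-two point via an embedding $G\subset\GL_n$ together with affineness of $\GL_n/G$, and identify the extended torsor with a standard one $\cP_\lambda$ read off from the closed point. The only step you compress is (ii): after lifting the cocharacter from the residue field to $R$ (using completeness and SGA3) one must pass from the isomorphism over all infinitesimal thickenings of $x_0$ to an actual isomorphism over $\overline{\ST}_R$, which the paper handles by an explicit graded limit computation.
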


\begin{remark}  In the above theorem, $\lambda|_K \in G(K)$ represents the composition $\Spec(K) \to \bG_{m,R} \xrightarrow{\lambda} G$, where the first map is defined by mapping a coordinate of $\bG_{m,R}$ to a uniformizing parameter in $R \subset K$.
\end{remark}

An easy linear algebra argument establishes the theorem in the case that $G = \GL_n$ (c.f. \cite[Lem.\ 7.7]{mukai-moduli}).  For 
 semi-simple algebraic groups of adjoint type over an algebraically closed field, the theorem was established in \cite[Cor. 2.17]{iwahori-matsumoto}; Mumford observed in \cite[p.52]{git} that this case is sufficient to imply the result for reductive groups in characteristic 0.   A slightly weaker version of \Cref{T:theorem1}, where finite extensions of the DVR are allowed, was established for reductive groups in positive characteristic in \cite[Thm. 2.1]{seshadri-quotient} (see also \cite[Appendix 2.A]{git2}).   See also \cite[\S 4]{bruhat-tits1} for analogous decomposition results for ramified groups. 

We provide a short and self-contained proof of \Cref{T:theorem1} in \Cref{S:elementary-proof} using a reduction argument to the case of $\GL_n$.  This argument was inspired by the stack-theoretic condition of S-completeness introduced in \cite{ahlh}.  In fact, our methods are sufficiently general to remove the condition in \Cref{T:theorem1} that the residue field be algebraically closed as well as establish a converse statement providing a characterization of reductivity.  Our main theorem is:

\begin{theorem} \label{T:theorem2}  
If $G \to S$ is a smooth affine group scheme over a noetherian scheme $S$, the following are equivalent:
\begin{enumerate}
	\item $G^0 \to S$ is reductive and $G/G^0 \to S$ is finite;
	\item $B_SG \to S$ is S-complete (see \Cref{D:S-complete}); and
	\item $G \to S$ has Cartan decompositions with respect to any complete DVR over $S$ (see \Cref{D:iwahori}).
\end{enumerate}
\end{theorem}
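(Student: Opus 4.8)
The plan is to reduce everything to the local situation, since both conditions (2) and (3) are tested against complete DVRs $\Spec(R) \to S$, so I may assume $S = \Spec(R)$ with $R$ a complete DVR of uniformizer $\pi$, fraction field $K$, and residue field $k$. The geometric object controlling the argument is the stack $\overline{\ST}_R = [\Spec(R[s,t]/(st-\pi))/\bG_m]$, with $\bG_m$ acting with weights $(1,-1)$ on $(s,t)$; S-completeness of a stack $\cX$ asks that every map $\overline{\ST}_R \setminus 0 \to \cX$ extend uniquely over the unique closed point $0$ (the image of $s=t=0$). Two facts about this stack are used repeatedly: a direct computation shows $\Spec(R[s,t]/(st-\pi))$ is regular of dimension $2$, so that $0$ has codimension $2$; and the punctured stack $\overline{\ST}_R \setminus 0$ is the union of the two charts $\{s\neq 0\}$ and $\{t\neq 0\}$, each isomorphic to $\Spec(R)$, glued along their overlap $\{s,t\neq 0\}\cong\Spec(K)$.

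First I would establish (2) $\Leftrightarrow$ (3) by unwinding the definition over such an $R$. A map $\overline{\ST}_R\setminus 0 \to B_SG$ is the data of two $G$-torsors on $\Spec(R)$ together with an identification of their restrictions to $\Spec(K)$; restricting to the trivial torsor on each chart, which is exactly what the Cartan decomposition in (3) records, this data is an element $g\in G(K)$, i.e.\ a point of the double coset $G(R)\backslash G(K)/G(R)$. A parallel computation identifies $\bG_m$-equivariant $G$-torsors on $\Spec(R[s,t]/(st-\pi))$ with the data of a cocharacter $\lambda\co\bG_{m,R}\to G$, and extending the punctured map over $0$ translates precisely into a factorization $g = h_1\,\lambda|_K\,h_2$ with $h_1,h_2\in G(R)$, with uniqueness of the extension matching the expected equivalence of such factorizations. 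The same reasoning applied to arbitrary (possibly nontrivial) boundary torsors upgrades this to full S-completeness, so (2) and (3) are two phrasings of one assertion.

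Next, for (1) $\Rightarrow$ (2), I would run the reduction to $\GL_n$. The case $G=\GL_n$ is pure linear algebra: Smith normal form over the PID $R$ brings $g\in\GL_n(K)$ into the form $h_1\,\diag(\pi^{a_1},\dots,\pi^{a_n})\,h_2$ with $h_i\in\GL_n(R)$, which by the dictionary above is S-completeness of $B_S\GL_n$. For general reductive $G$ with $G/G^0$ finite, choose a closed embedding $G\hookrightarrow\GL_n$; reductivity guarantees, via Matsushima's criterion, that $\GL_n/G$ is affine. Given a map $\overline{\ST}_R\setminus 0\to B_SG$, compose with $B_SG\to B_S\GL_n$ and extend over $0$ using the $\GL_n$ case. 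The extension lifts back to $B_SG$ provided a section of the associated $(\GL_n/G)$-bundle, defined on $\overline{\ST}_R\setminus 0$, extends over $0$; but this bundle is affine and $\overline{\ST}_R$ is regular with $0$ of codimension $2$, so the section extends by a Hartogs-type argument, and the same argument gives uniqueness. For the converse (2) $\Rightarrow$ (1) I would construct obstructions when $G$ is not of the stated form: if some fiber of $G^0$ has nontrivial unipotent radical it contains a $\bG_a$, and already for $\bG_a$ the identity $\Hom(\bG_m,\bG_a)=0$ forces $\lambda$ trivial, so any decomposition would put $g\in R+R=R$, whereas $g=\pi^{-1}$ admits none; transporting this configuration into $G$ shows $B_SG$ is not S-complete, and an infinite component group obstructs the unique extension of torsors across $0$ in the same spirit.

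The step I expect to be the main obstacle is the lifting in (1) $\Rightarrow$ (2): producing the cocharacter and the elements $h_1,h_2$ in $G$ rather than merely in $\GL_n$, by reducing it to the Hartogs extension of a section of the affine quotient $\GL_n/G$. This is exactly where reductivity is used essentially, through the affineness of $\GL_n/G$, and where the complete absence of hypotheses on the residue field $k$ must be accommodated: the argument proceeds entirely through the codimension-$2$ geometry of $\overline{\ST}_R$ rather than through any root-datum classification over $k$, which is what makes the strengthening of \Cref{T:theorem1} to arbitrary residue fields possible.
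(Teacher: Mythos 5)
Your overall architecture matches the paper's (reduction to $\GL_n$ via affineness of $\GL_n/G$ for (1)$\Rightarrow$(2), a dictionary between maps out of $\overline{\ST}_R$ and Cartan decompositions for (2)$\Leftrightarrow$(3), and a unipotent obstruction for (2)$\Rightarrow$(1)), but the step you dismiss as ``a parallel computation'' --- that a $\bG_m$-equivariant $G$-torsor on $\Spec(R[s,t]/(st-\pi))$ is classified by a cocharacter $\lambda\co\bG_{m,R}\to G$ --- is the technical heart of the entire theorem, and nothing in your proposal supplies it. Three separate inputs are needed. First, the fiber of the extended torsor over the closed point $0$ must be shown to be a \emph{trivial} $G_k$-torsor, so that the $\bG_m$-action on it gives a cocharacter of $G_k$ rather than of an inner form; when $k$ is not algebraically closed this is not automatic, and the paper proves it by restricting the torsor to $\{s=0\}\cong[\bA^1_k/\bG_m]$, where it is trivial at $1$, and using smoothness of the Isom-scheme plus Tannaka duality to propagate that trivialization to the origin. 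Second, the resulting cocharacter $\lambda_0\co\bG_{m,k}\to G_k$ must be lifted to $\lambda\co\bG_{m,R}\to G$; this uses rigidity of tori (SGA3, Exp.~IX, Cor.~7.3) and is precisely where completeness of $R$ enters --- your proposal never invokes completeness in this step. Third, one must show the equivariant torsor is actually isomorphic to the one induced by $\lambda$, not merely isomorphic over the closed fiber; this requires a formal-functions/coherent-completeness argument (the graded $\varprojlim$ computation in the paper's proof of \Cref{T:theorem1}, or \Cref{L:full}). Without these three steps your asserted equivalence (2)$\Leftrightarrow$(3) is unproved, and since (3) is deduced from nothing else, the theorem does not follow. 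Note also that your claim that the argument ``proceeds entirely through the codimension-$2$ geometry'' and so needs no care over the residue field is backwards: the codimension-$2$ Hartogs argument only yields the \emph{existence} of an extension (your (1)$\Rightarrow$(2), which is fine); the residue-field subtlety lives entirely in converting that extension into group elements $h_1,\lambda,h_2$.

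A second, smaller gap: your (2)$\Rightarrow$(1) argument by ``transporting'' the $\bG_a$ counterexample into $G$ does not work as stated. An element of $\bG_a(K)\subset G(K)$ with no Cartan decomposition in $\bG_a$ can perfectly well acquire one in $G$ (e.g.\ a unipotent upper-triangular matrix with entry $\pi^{-1}$ decomposes in $\SL_2$), so the failure does not transport pointwise. The correct argument runs in the other direction: choose $\bG_a$ normal in the unipotent radical so that $B_k\bG_a\to B_kG$ is an \emph{affine} morphism, hence S-complete as a morphism; S-completeness of $B_kG$ then forces S-completeness of $B_k\bG_a$, which is the contradiction. You would also need to address why ``fibers reductive'' upgrades to ``$G^0\to S$ is a reductive group scheme'' (the paper cites SGA3 for affineness of $G^0\to S$) and to give an actual argument for finiteness of $G/G^0$, for which the paper uses separatedness criteria for stacks with quasi-finite inertia rather than the unexplained ``same spirit'' you gesture at.
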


The above characterization of reductivity in terms of the existence of Cartan decompositions was recently also discovered by Chenyang Xu and Jun Yu.

While it is possible to both formulate and prove \Cref{T:theorem2} avoiding the language of algebraic stacks using arguments similar to those in \Cref{S:elementary-proof}, we find that algebraic stacks provide a natural and powerful language which allows for a conceptual proof of \Cref{T:theorem2}.  We prove this theorem in \Cref{S:proof}. In \Cref{S:Non-Complete-DVR} we use this result to deduce a decomposition over non-complete DVR's, in which case an additional hypothesis on $G$ is required.

\subsection*{Acknowledgements} We thank Xiaowei Wang and Chenyang Xu for helpful conversations.  The first author was partially supported by NSF grant DMS-1801976. The second author was partially supported by NSF grant DMS-1762669. The third author was partially supported by Sonderforschungsbereich/Transregio 45 of the DFG.

\section{Short Proof of \Cref{T:theorem1}} \label{S:elementary-proof}

\begin{proof}[Proof of \Cref{T:theorem1}]
Let $\pi \in R$ be a uniformizing parameter and consider the affine scheme $X=\Spec(R[s,t]/(st-\pi))$ endowed with the action of $\bG_m$ where $s$ and $t$ have weights $1$ and $-1$, respectively.  Let $0 \in X$ be the point corresponding to the maximal ideal $\fm = (s,t)$.  

\bigskip
\noindent {\it Claim 1:} Any  $g \in G(K)$ determines a $G$-torsor $\cP_g \to X \smallsetminus 0$ with a left $\bG_m$-action commuting with the right $G$-action and compatible with the $\bG_m$-action on $X \smallsetminus 0$.

To see this, observe that $X \smallsetminus 0$ is the union of the $\bG_m$-invariant open subschemes $X_s = \Spec(R[s]_s) \cong \Spec(R) \times \bG_m$ and $X_t = \Spec(R[t]_t) \cong \Spec(R) \times \bG_m$ along $X_{st} = \Spec(K[s]_s) \cong \Spec(K) \times \bG_m$.  If we consider the trivial $G$-torsors $X_s \times G \to X_s$ and $X_t \times G \to X_t$ with the $\bG_m$-action which is trivial on each copy of $G$, then the element $g \in G(K)$ yields a $\bG_m$-equivariant isomorphism of their restrictions to $X_{st}$ and therefore a $G$-torsor $\cP_g \to X \smallsetminus 0$ with a $\bG_m$-action.

\bigskip
\noindent {\it Claim 2:} There is a $\bG_m$-equivariant extension of $\cP_g$ to a $G$-torsor $\widetilde{\cP}_g \to X$ with a left $\bG_m$-action commuting with the right $G$-action and compatible with the $\bG_m$-action on the base.

Choose an embedding $G \subset \GL_n$.  
Giving a $G$-torsor on a scheme $T$ is equivalent to giving a $\GL_n$-torsor $E \to T$ and a section of $E/G \to T$.  Under this correspondence, we are given a $\GL_n$-torsor $E \to X \smallsetminus 0$ and a section $X \smallsetminus 0 \to E/G$.  
As $X$ is regular and $0 \in X$ is codimension 2, there is a unique extension of $E \to X \smallsetminus 0$ to a $\GL_n$-torsor $\widetilde{E} \to X$; indeed, we can translate this question into the analogous extension property for vector bundles and if $\cV$ is a vector bundle on $X \smallsetminus 0$, then $(X \smallsetminus 0 \to X)_* \cV$ is the unique extension.  As $G$ is reductive, $\widetilde{E}/G$ is affine.  Since $\Gamma(X, \oh_X) = \Gamma(X \smallsetminus 0, \oh_{X})$, the section $X \smallsetminus 0 \to \widetilde{E}/G$ extends uniquely to a section $X \to  \widetilde{E}/G$.

\bigskip
\noindent {\it Claim 3:} There is a one-parameter subgroup $\lambda \co \bG_{m,R} \to G$ and a $\bG_m$-equivariant isomorphism of $G$-torsors between $\widetilde{\cP}_g$ and the trivial $G$-torsor $\cP_{\lambda} := \Spec(R[s,t]/(st-\pi) \times G \to \Spec(R[s,t]/(st-\pi)$ with a left $\bG_m$-action where $\bG_m$ acts on the copy of $G$ via multiplication by $\lambda$.

The fiber of $\widetilde{\cP}_g \to X$ over $0 \in X$ is the trivial $G_k$-torsor $G_k \to \Spec(k)$ with a left $\bG_m$-action commuting with the right $G_k$-action. As we assumed $k$ to be algebraically closed the restriction $\widetilde{\cP}_g|_0$ of $\widetilde{\cP}_g$ to the origin is trivial and thus this data determines a one-parameter subgroup $\lambda_0 \co \bG_{m,k} \to \Aut_{G_k}(\widetilde{\cP}_g|_0) \cong G_k$ unique up to conjugation.

Since $R$ is complete, we may lift $\lambda_0$ to a one-parameter subgroup $\lambda \co  \bG_{m,R} \to G$ by  \cite[Exp.~IX, Cor. 7.3]{sga3ii}.
This equips the trivial $G$-torsor over $X$ with a $\bG_m$-action and we denote the corresponding equivariant torsor by $\cP_\lambda$.
There is a $\bG_m$-equivariant isomorphism $\alpha^{[0]}$ between the fibers of the $G$-torsors $\cP_{\lambda}$  and $\widetilde{\cP}_g$ over $0 \in X$.  Let $X^{[n]} \subset X$ be the $n$th nilpotent thickening of $0 \in X$, i.e., the subscheme defined by $\fm^{n+1}$.  As  $\Isom(\cP_\lambda, \widetilde{\cP}_g)$ is smooth over $X$ and $\bG_m$ is linearly reductive, the isomorphism $\alpha^{[0]} \co  \cP_{\lambda}|_{X^{[0]}} \iso \widetilde{\cP}_g|_{X^{[0]}}$ lifts to a compatible family of $\bG_m$-equivariant isomorphisms $\alpha^{[n]} \co \cP_{\lambda}|_{X^{[n]}} \iso \widetilde{\cP}_g|_{X^{[n]}} $.

Finally, we claim that the isomorphisms $\alpha^{[n]}$ extend to a $\bG_m$-equivariant isomorphism $\alpha \co \cP_{\lambda} \to \widetilde{\cP}_g$ of $G$-torsors. Let $\cI = \Isom(\cP_\lambda, \widetilde{\cP}_g)$, which we identify with $\widetilde{\cP}_g$ with its $\bG_m$-action modified by right multiplication by $\lambda^{-1}$. The existence of the extension $\alpha$ is equivalent to giving a $\bG_m$-equivariant section $X \to \cI$ extending the sections $X^{[n]} \to \cI$ induced by $\alpha^{[n]}$. We translate this claim into commutative algebra by writing $A = \Gamma(X, \oh_X)$ and $A^{[n]} = \Gamma(X^{[n]}, \oh_{X^{[n]}}) =  A / \fm^{n+1}$.  The $\bG_m$-action induces $\bZ$-gradings $A = \oplus_d A_d$ and $A^{[n]} = \oplus_d A^{[n]}_d$ where
 $$A_{d} =\left\{
 \begin{array}{rl}
R \langle s^d \rangle & \text{if } d \ge 0\\
 R \langle t^d \rangle & \text{if } d < 0
 \end{array} \right.
 \quad \text{and} \quad
A^{[n]}_d =\left\{
 \begin{array}{rl}
R/(\pi^{\lfloor \frac{n-d}{2} \rfloor + 1}) \langle s^d \rangle & \text{if } n \ge d \ge 0\\
R/(\pi^{\lfloor \frac{n+d}{2} \rfloor + 1}) \langle t^d \rangle & \text{if } n \ge -d > 0 \\
0 & \text{otherwise.} 
 \end{array} \right.
  $$
  As $R$ is complete, we have that $A_d \cong \ilim_n A^{[n]}_d$ for all $d$. 
Similarly, the $\bG_m$-action on $\cI$ induces a $\bZ$-grading $\Gamma(\cI, \oh_{\cI}) = \oplus_d \Gamma(\cI, \oh_{\cI})_d$.  The existence of $\alpha$ is then equivalent to giving a graded homomorphism $\Gamma(\cI, \oh_{\cI}) \to A$ filling in the diagram
$$\xymatrix{
\Gamma(\cI, \oh_{\cI}) \ar@{-->}[r] \ar[rd]	& A \ar[d] \\
							& A^{[n]}
}$$
for each $n \ge 0$.  For each $d$, the compatible maps $\Gamma(\cI, \oh_{\cI})_d \to A^{[n]}_d$ induce a map $\Gamma(\cI, \oh_{\cI})_d \to \ilim_n A^{[n]}_d \cong A_d$, which verifies the existence of $\alpha$.

\bigskip
\noindent {\it Conclusion:} The existence of elements $h_1, h_2 \in G(R)$ such that $g = h_1 \lambda|_K h_2$ now follows from the following two elementary observations: 
\begin{enumerate}
\item[(A)] If $g, g' \in G(K)$ are elements, the $G$-torsors $\cP_g$ and $\cP_{g'}$ on $X \smallsetminus 0$ are $\bG_m$-equivariantly isomorphic if and only if and only if there are elements $h,h' \in G(R)$ such that $hg = g' h'$.  
\item[(B)] If $\lambda \co \bG_{m,R} \to G$ is a one-parameter subgroup, then there is a $\bG_m$-equivariant isomorphism of $G$-torsors on $X \smallsetminus 0$ between $\cP_{\lambda}|_{X \smallsetminus 0}$ and $\cP_{g'}$, where $g' = \lambda|_K$.  
\end{enumerate}
\end{proof}

\section{Proof of \Cref{T:theorem2}} \label{S:proof}

\subsection{S-complete morphisms}

In order to prove \Cref{T:theorem2}, we recall the definition and some basic properties of S-complete morphisms from \cite{ahlh}.  First, if $R$ is a DVR with uniformizing parameter $\pi$, we define the algebraic stack
$$\overline{\ST}_R := [\Spec \big(R[s,t] / (st-\pi)\big)  / \bG_m],$$
where $s$ and $t$ have weight $1$ and $-1$.

\begin{defn} \label{D:S-complete}
We say that a morphism $f \co \cX \to \cY$ of locally noetherian algebraic stacks is \emph{S-complete} if for any DVR $R$ and any commutative diagram
\begin{equation} \label[diagram]{E:S-complete}
\begin{split}
\xymatrix{
\overline{\ST}_R  \smallsetminus 0 \ar[r] \ar[d]					& \cX \ar[d]^f \\
\overline{\ST}_R \ar[r] \ar@{-->}[ur]		& \cY
}
\end{split} \end{equation}
of solid arrows, there exists a unique dotted arrow filling in the diagram.
\end{defn}

The property of being S-complete is stable under base change. The following elementary properties of S-completeness were established in \cite[\S 3.5]{ahlh}.

\begin{prop} \label{P:S-complete} \quad
\begin{enumerate}
\item \label{P:S-complete1} An affine morphism of locally noetherian algebraic stacks is S-complete.
\item \label{P:S-complete2}  $B_{\bZ} \GL_N \to \Spec(\bZ)$ is S-complete. 
\end{enumerate}
Let $f \co \cX \to \cY$ be a morphism of noetherian algebraic stacks. 
\begin{enumerate} \setcounter{enumi}{2}
\item  \label{P:S-complete3}
If $\cX$ and $\cY$ both have quasi-finite and separated inertia, then $f$ is S-complete if and only if $f$ is separated.
\item \label{P:S-complete5} If $\cY' \to \cY$ is an \'etale, representable and surjective morphism, then $\cX \to \cY$ is S-complete if and only if $\cX \times_{\cY} \cY' \to \cY'$ is S-complete.
\item \label{P:S-complete6} If $\cX' \to \cX$ is a finite, \'etale and surjective morphism, then $\cX \to \cY$ is S-complete if and only if $\cX' \to \cY$ is S-complete.
\item \label{P:S-complete4} If $f$ has affine diagonal, then a lifting in \eqref{E:S-complete} is automatically unique, and it suffices to verify the existence of a lifting after passing to an arbitrary extension $R \subset R'$ of DVRs.
\end{enumerate}
\end{prop}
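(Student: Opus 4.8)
The plan is to exploit two elementary features of $\overline{\ST}_R = [W/\bG_m]$, where $W = \Spec(R[s,t]/(st-\pi))$. First, $W$ is a regular two-dimensional scheme in which the origin $0$ is a closed point of codimension $2$; consequently, writing $j \co \overline{\ST}_R \smallsetminus 0 \hookrightarrow \overline{\ST}_R$, one has $j_* \oh = \oh$, reflexive sheaves extend uniquely across $0$, and (by Zariski--Nagata purity) so do finite \'etale covers. Second, $\bG_m$ acts freely on $W \smallsetminus 0$, so the open substack $\overline{\ST}_R \smallsetminus 0$ is the non-separated scheme $\Spec R \sqcup_{\Spec K} \Spec R$ obtained by doubling the closed point of $\Spec R$; a map $\overline{\ST}_R \smallsetminus 0 \to \cX$ is thus exactly a pair of maps $x_s, x_t \co \Spec R \to \cX$ equipped with an isomorphism $x_s|_K \simeq x_t|_K$.

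For part (1), I would use stability under base change to reduce to $\cY = \overline{\ST}_R$, so that $\cX = \Spec_{\overline{\ST}_R}\cA$ for a quasi-coherent sheaf of algebras $\cA$ and the lifting problem becomes the extension of an algebra section defined over the complement of $0$; applying $j_*$ and using $j_*\oh = \oh$ produces the unique extension $\cA \to j_*(\cA|_{\overline{\ST}_R \smallsetminus 0}) \to \oh$. Part (2) is the same principle one categorical level up: a map to the classifying stack $B\GL_N$ is a vector bundle, its reflexive hull $j_*\cV$ is again locally free because $W$ is a regular surface, and uniqueness follows from uniqueness of the reflexive extension --- this is exactly Claim 2 in the proof of \Cref{T:theorem1}.

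For part (3) the crucial observation is that when $\cX$ has quasi-finite inertia every homomorphism $\bG_m \to \Aut(x)$ is trivial, so a map $\overline{\ST}_R \to \cX$ cannot detect the stabilizer at $0$ and is forced to contract the central $\bG_m$-orbit. Combined with the doubled-point description of $\overline{\ST}_R \smallsetminus 0$, the existence and uniqueness of the dotted arrow in \eqref{E:S-complete} translate precisely into the assertion that two $R$-points of $\cX$ lying over a common $R$-point of $\cY$ and agreeing over $K$ must already agree over $R$. This is the valuative criterion for separatedness of $f$, where separatedness here means that $\Delta_f$ is finite (proper plus quasi-finite), a condition detected by complete DVRs. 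Matching these two valuative conditions --- and in particular treating the case where $\Delta_f$ is quasi-finite rather than a monomorphism --- is the step I expect to be the main obstacle.

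For parts (4) and (5) the forward implications are instances of base-change stability, and for the converses I would extend covers across the codimension-$2$ point. In (5), pulling the finite \'etale cover $\cX' \to \cX$ back along $\overline{\ST}_R \smallsetminus 0 \to \cX$ gives a finite \'etale cover of $\overline{\ST}_R \smallsetminus 0$, which by purity extends to a finite \'etale cover $\widetilde{\cW} \to \overline{\ST}_R$; one then solves the lifting problem over $\widetilde{\cW}$ using S-completeness of $\cX' \to \cY$ and descends the solution along the fppf cover $\widetilde{\cW} \to \overline{\ST}_R$. Part (4) is analogous, with $\overline{\ST}_R \times_\cY \cY' \to \overline{\ST}_R$ playing the role of the cover and its local structure near $0$ being again of the form $\overline{\ST}_{R'}$. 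Finally, part (6) is where the affine diagonal enters directly: for two lifts $g_1, g_2$, the stack $\Isom_{\overline{\ST}_R}(g_1,g_2)$ is affine over $\overline{\ST}_R$ and carries a section over $\overline{\ST}_R \smallsetminus 0$, which extends uniquely across $0$ by the mechanism of part (1), yielding a unique $2$-isomorphism and hence uniqueness of lifts; and a lift produced after a flat extension $R \subset R'$ descends because its two pullbacks to the fiber product agree by this uniqueness, giving an effective descent datum since morphisms into a stack with affine diagonal satisfy fppf descent.
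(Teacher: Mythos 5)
The paper does not actually prove \Cref{P:S-complete}: it is quoted from \cite[\S 3.5]{ahlh}, so there is no internal argument to measure yours against. Judged on its own terms, your sketch follows the same general lines as that reference, and several parts are essentially complete. For \eqref{P:S-complete1}, the reduction to $\cY=\overline{\ST}_R$ followed by $j_*\oh=\oh$ is correct (and valid for an arbitrary DVR, since $R[s,t]/(st-\pi)$ is regular of dimension $2$ with $0$ of codimension $2$). For \eqref{P:S-complete2}, the reflexive-extension argument is exactly the mechanism of Claim 2 in the proof of \Cref{T:theorem1}, as you say. Your treatment of \eqref{P:S-complete4} is also sound: $\Isom(g_1,g_2)$ is affine over $\overline{\ST}_R$, its section over the complement of $0$ extends by \eqref{P:S-complete1}, and descent along $\overline{\ST}_{R'}\to\overline{\ST}_R$ works because the relevant fiber products are finite flat over $\overline{\ST}_{R'}$, hence S2, so the same extension mechanism supplies the (automatically cocyclic) descent datum.

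The genuine gaps are where you suspect them, and they are the actual content of the remaining items. In \eqref{P:S-complete3}, the crux is not that $\bG_m\to\Aut(x)$ is trivial, but that a map $\overline{\ST}_R\to\cX$ is then forced to identify the two branches $x_s$ and $x_t$ over all of $\Spec R$, not merely at the closed point; this requires an argument (for instance that $\Gamma(\overline{\ST}_R,\oh)=R$ and the closed point lies in the closure of both branches, combined with quasi-finiteness and separatedness of the inertia), and one further needs Zariski's Main Theorem to pass from ``$\Delta_f$ proper'' to ``$\Delta_f$ finite.'' In \eqref{P:S-complete5} and \eqref{P:S-complete6}, the phrase ``its local structure near $0$ being again of the form $\overline{\ST}_{R'}$'' is precisely what must be proved, and it is false without a preliminary reduction: for non-henselian $R$ the restriction of the extended cover $\widetilde{\cW}\to\overline{\ST}_R$ to either copy of $\Spec R$ inside $\overline{\ST}_R\smallsetminus 0$ may be a connected cover $\Spec R''$ with $R''$ semilocal and not a DVR, so the components of $\widetilde{\cW}$ need not be stacks of the form $\overline{\ST}_{R'}$. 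The standard remedy is to first reduce to $R$ complete with separably closed residue field, where every finite \'etale cover of $\overline{\ST}_R\smallsetminus 0=\Spec R\cup_{\Spec K}\Spec R$ is a disjoint union of copies of the base (each sheet over the first $\Spec R$ glues to exactly one sheet over the second) and hence, by uniqueness of the purity extension, so is its extension to $\overline{\ST}_R$; for \eqref{P:S-complete5} the cover is not finite, so purity does not apply and one must instead produce a lift $\overline{\ST}_{R'}\to\cY'$ after an extension of DVRs by a henselization argument at a point of the fiber over $0$. Finally, a small slip: the forward implication of \eqref{P:S-complete6} is not base change but composition --- $\cX'\to\cX$ is finite, hence affine, hence S-complete by \eqref{P:S-complete1}, and one then uses that a composition of S-complete morphisms is S-complete, a stability property your argument relies on but never states.
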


\subsection{A coherent completeness result}

To prove \Cref{T:theorem2}, we will replace Claim 3 in the proof of \Cref{T:theorem1} with the following consequence of coherent completeness (\cite[Def.~3.3]{ahr}) and Tannaka duality:

\begin{lem} \label{L:full}
Let $R$ be a complete DVR with residue field $k$.  Let $G \to \Spec(R)$ be a smooth affine group scheme. The restriction functor of groupoids
$$\Map(\overline{\ST}_R, B_R G) \to \Map(B_k \bG_m, B_R G),$$
induced by the inclusion $B_k \bG_m \subset \overline{\ST}_R$, is full.
\end{lem}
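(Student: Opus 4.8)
The plan is to exhibit $\overline{\ST}_R$ as a stack that is coherently complete along its unique closed point and to combine this with Tannaka duality. Write $A = R[s,t]/(st-\pi)$, $\fm = (s,t)$, and let $\overline{\ST}_R^{[n]} := [\Spec(A/\fm^{n+1})/\bG_m]$ be the $n$-th infinitesimal thickening of the origin. Since $A^{\bG_m} = A_0 = R$, the stack $\overline{\ST}_R$ has good moduli space $\Spec R$, and since $A/\fm = k$ with $\bG_m$ acting trivially, its unique closed point is the residual gerbe $\overline{\ST}_R^{[0]} = B_k\bG_m$. Because $R$ is a complete DVR and $\bG_m$ is linearly reductive in every characteristic, the coherent completeness theorem of \cite{ahr} applies and gives an equivalence $\operatorname{Coh}(\overline{\ST}_R) \iso \varprojlim_n \operatorname{Coh}(\overline{\ST}_R^{[n]})$.

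First I would upgrade this sheaf-level statement to one about mapping groupoids. As $G$ is affine, $B_RG$ has affine diagonal, so Tannaka duality (e.g.\ \cite{hall-rydh}) identifies, naturally in the source $\cX$, the groupoid $\Map(\cX, B_RG)$ with the groupoid of symmetric monoidal, right-exact, colimit-preserving functors $\operatorname{QCoh}(B_RG) \to \operatorname{QCoh}(\cX)$. Feeding coherent completeness into this identification produces a natural equivalence
$$\Map(\overline{\ST}_R, B_RG) \iso \varprojlim_n \Map(\overline{\ST}_R^{[n]}, B_RG).$$
Consequently, an isomorphism between two objects $P_1, P_2$ of $\Map(\overline{\ST}_R, B_RG)$ is the same as a compatible system of isomorphisms of their restrictions to the $\overline{\ST}_R^{[n]}$. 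This reduces the asserted fullness to the following: any isomorphism $\phi$ of the restrictions of $P_1$ and $P_2$ to $\overline{\ST}_R^{[0]} = B_k\bG_m$ lifts to a compatible system over all thickenings.

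I would produce these lifts by deformation theory. Set $\cI := \Isom(P_1, P_2)$, which is affine and, since $G$ is smooth, smooth over $\overline{\ST}_R$; an isomorphism $P_1 \cong P_2$ over a base is exactly a section of $\cI$ there. The thickening $\overline{\ST}_R^{[n]} \hookrightarrow \overline{\ST}_R^{[n+1]}$ is square-zero with ideal $I_n = \fm^{n+1}/\fm^{n+2}$, so the obstruction to extending a section $\phi_n$ across it lies in $H^1(\overline{\ST}_R^{[n]}, \phi_n^* T_{\cI/\overline{\ST}_R} \otimes I_n)$. Since $\overline{\ST}_R^{[n]}$ is the quotient of an affine scheme by the linearly reductive group $\bG_m$, its coherent cohomology vanishes in positive degrees, so the obstruction is zero and a lift $\phi_{n+1}$ restricting to $\phi_n$ exists. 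Choosing lifts inductively yields the required compatible system, which by the displayed equivalence algebraizes to an isomorphism $P_1 \iso P_2$ extending $\phi$.

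The main obstacle is the middle step: promoting the coherent completeness of \cite{ahr} to the equivalence of mapping groupoids. This requires the Tannakian formalism for the target $B_RG$ (valid because $G$ is affine) together with care that the homotopy limit on the right has as its morphisms precisely the compatible families of isomorphisms. Granting this, the deformation-theoretic lifting is routine, and---crucially for removing the hypothesis that $k$ be algebraically closed---it uses only that $\bG_m$ is linearly reductive, which holds over an arbitrary base.
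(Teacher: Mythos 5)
Your proof is correct and follows essentially the same route as the paper's: coherent completeness of $\overline{\ST}_R$ along $B_k\bG_m$ combined with Tannaka duality to reduce to the system of thickenings, then a deformation-theoretic lifting of the isomorphism whose obstruction vanishes by linear reductivity of $\bG_m$. The only cosmetic difference is that you phrase the obstruction via sections of the smooth affine $\Isom$-scheme while the paper invokes Olsson's deformation theory of morphisms and the cotangent complex $L_{B_RG/R}$; these amount to the same vanishing computation.
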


\begin{proof}
The stack $\overline{\ST}_R$ is coherently complete along the residual gerbe $B_k \bG_m$ of the closed point $0 \in \overline{\ST}_R$ by \cite[Thm.~1.3]{ahr}.  Let $\cI$ be the ideal sheaf defining $B_k \bG_m \subset \overline{\ST}_R$ and let $\cZ^{[n]} \subset \overline{\ST}_R$ be the closed substack defined by $\cI^{n+1}$.  Consider maps $f,f' \co \overline{\ST}_R \to BG$ and a 2-isomorphism $\alpha^{[0]} \co f|_{\cZ^{[0]}} \iso f'|_{\cZ^{[0]}}$.  By \cite{olsson-defn}, the obstruction to extending a 2-isomorphism $\alpha^{[n]} \co f|_{\cZ^{[n]}}\iso f'|_{\cZ^{[n]}}$ to $\alpha^{[n+1]} \co f|_{\cZ^{[n+1]}}\iso f'|_{\cZ^{[n+1]}}$ lies in the group $\Ext^0(Lf_0^* L_{BG/R}, \cI^n / \cI^{n+1})$ where $f_0 \co B_k \bG_{m} \to \overline{\ST}_R \to BG$. Since $B_k \bG_m$ has no higher coherent cohomology and $L_{BG/R}$ is supported in degree 1 (as $G$ is smooth), the obstruction vanishes and we obtain compatible 2-isomorphisms between $f|_{\cZ^{[n]}}$ and $f'|_{\cZ^{[n]}}$ for every $n$.  A special case of Tannaka duality (e.g. \cite[Cor.~3.6]{ahr}) asserts that the restriction map
$$\Map(\overline{\ST}_R, BG) \to \colim \Map(\cZ^{[n]}, BG)$$
is an equivalence of categories.  It follows that there is a 2-isomorphism $\alpha \co f \iso f'$ extending $\alpha^{[0]}$.
\end{proof}

\subsection{Cartan decompositions}

Let $G$ be a group scheme over a DVR $R$ with fraction field $K$ and uniformizing parameter $\pi$.  If $\lambda \co \bG_{m,R} \to G$ is a homomorphism of group schemes, then we write $\lambda|_K \in G(K)$ for the composition of $\Spec(K) \to \bG_{m,R} = \Spec(R[t]_t)$, defined by $t \mapsto \pi$, with $\lambda$.

\begin{defn} \label{D:iwahori} Let $G \to S$ be a smooth affine group scheme over a noetherian scheme $S$.
Let $R$ be a DVR over $S$ with fraction field $K$. 
We say that {\it $G$ has Cartan decompositions\footnote{It would be more accurate to call these `Cartan--Iwahori--Matsumoto decompositions.'  We chose the above notation for the sake of brevity.} with respect to $R$} if any element $g \in G(K)$ can be written as $g = h_1 \lambda|_K h_2$ where $h_1, h_2 \in G(R)$ and $\lambda \co \bG_{m,R} \to G_R$ is a one-parameter subgroup.
\end{defn}

\begin{remark} \label{R:iwahori}
If $S$ is the spectrum of a  DVR $R$ and $T \subset G$ is a maximal split torus over $R$, then $G$ has Cartan decompositions with respect to $R$ if and only if 
$$G(K) = G(R) T(K) G(R).$$
For the `$\Rightarrow$' direction, we may find an element $h \in G(R)$ such that the image of $h \lambda|_K h^{-1}$ is contained in $T$ by \cite[Exp.~XXVI, Prop. 6.16]{sga3iii}. 
Then 
$$g = h_1 \lambda|_K h_2 = \underbrace{(h_1 h^{-1})}_{\in G(R)}\underbrace{(h \lambda|_K h^{-1})}_{\in T(K)}\underbrace{(h h_2)}_{\in G(R)}.$$

  Conversely, suppose $g \in G(K)$ can be written as $g = h_1 t h_2$ for $h_1, h_2 \in G(R)$ and $t \in T(K)$.  If we write $T \cong \bG_m^r$ and $\pi \in R$ as the uniformizing parameter, then $t =  (u_1 \pi^{d_1}, \ldots, u_r \pi^{d_r})$ for units $u_i \in R^{\times}$ and integers $d_i \in \bZ$.  After replacing $h_1$ with $h_1 (u_1, \ldots, u_r)$, we can write $g = h_1 \lambda|_K h_2$ where $\lambda \co \bG_{m,R} \to T \subset G$ is the one-parameter subgroup given by $t \mapsto (t^{d_1}, \ldots, t^{d_r})$.
\end{remark}

As $\overline{\ST}_R \smallsetminus 0 = \Spec(R) \bigcup_{\Spec(K)} \Spec(R)$, an element $g \in G(K)$ determines a morphism 
$$\rho_g \co \overline{\ST}_R \smallsetminus 0 \to B_SG$$ 
by gluing two trivial $G$-torsors over $\Spec(R)$ via the isomorphism induced by $g$ of their restrictions to $\Spec(K)$.

\begin{lem} \label{L:Iwahori-vs-S-complete}
 Let $G \to S$ be a smooth affine group scheme over a noetherian scheme $S$.
Let $R$ be a complete DVR over $S$ with fraction field $K$.  For any element $g \in G(K)$, the following are equivalent:
\begin{enumerate}
\item $g$ can be written as $g = h_1 \lambda|_K h_2$ where $h_1, h_2 \in G(R)$ and $\lambda \co \bG_{m,R} \to G_R$ is a one-parameter subgroup; and
\item there exists a dotted arrow filling in the commutative diagram
$$\xymatrix{
\overline{\ST}_R \smallsetminus 0 \ar[r]^{\rho_g} \ar[d]	&	B_SG \\
\overline{\ST}_R .  \ar@{-->}[ur]
}$$
\end{enumerate}
\end{lem}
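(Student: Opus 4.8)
The plan is to deduce both implications from the two elementary observations (A) and (B) recorded at the end of the proof of \Cref{T:theorem1}, now read in the stacky language. Recall that a morphism $\overline{\ST}_R \smallsetminus 0 \to B_SG$ is precisely the equivariant torsor $\cP_g$, i.e.\ the datum $\rho_g$; observation (A) says that $\rho_g$ and $\rho_{g'}$ are $2$-isomorphic if and only if there exist $h,h' \in G(R)$ with $hg = g'h'$, and observation (B) says that the one-parameter subgroup torsor $\cP_\lambda$ restricts on $\overline{\ST}_R \smallsetminus 0$ to $\rho_{\lambda|_K}$. Throughout I would silently replace $B_SG$ by $B_R G$ for $G$ base-changed to $R$, using $\Map_S(\overline{\ST}_R, B_SG) = \Map_R(\overline{\ST}_R, B_RG)$, so that \Cref{L:full} applies.

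For $(1) \Rightarrow (2)$, I would start from a factorization $g = h_1 \lambda|_K h_2$. Rewriting it as $h_1^{-1} g = (\lambda|_K) h_2$ exhibits an identity $hg = g'h'$ with $g' = \lambda|_K$, $h = h_1^{-1}$ and $h' = h_2$ in $G(R)$, so observation (A) yields a $2$-isomorphism $\rho_g \iso \rho_{\lambda|_K}$. By observation (B) the target is the restriction to $\overline{\ST}_R \smallsetminus 0$ of the morphism $\overline{\ST}_R \to B_RG$ classifying $\cP_\lambda$; composing with this $2$-isomorphism produces the desired dotted arrow.

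For the converse $(2) \Rightarrow (1)$, suppose a lift $\widetilde{\rho} \co \overline{\ST}_R \to B_RG$ of $\rho_g$ is given. First I would restrict $\widetilde{\rho}$ to the residual gerbe $B_k \bG_m \subset \overline{\ST}_R$ of the origin and extract a one-parameter subgroup $\lambda_0 \co \bG_{m,k} \to G_k$ from the resulting $\bG_m$-equivariant torsor: after choosing $G \subset \GL_n$, the induced $\GL_n$-torsor over $\Spec(k)$ is trivial, so the $\bG_m$-action is recorded by a cocharacter (this is the one step where the triviality of the underlying $G_k$-torsor at the origin must be controlled, e.g.\ via Hilbert~90 for the ambient $\GL_n$-bundle, and it is here that the residue field plays a role). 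Since $R$ is complete, $\lambda_0$ lifts to $\lambda \co \bG_{m,R} \to G_R$ by \cite[Exp.~IX, Cor.~7.3]{sga3ii}. By construction $\cP_\lambda$ and $\widetilde{\rho}$ have $2$-isomorphic restrictions to $B_k\bG_m$. Now I invoke \Cref{L:full}: the restriction functor $\Map(\overline{\ST}_R, B_RG) \to \Map(B_k\bG_m, B_RG)$ is full, so this $2$-isomorphism on the central gerbe extends to a $2$-isomorphism $\cP_\lambda \iso \widetilde{\rho}$ over all of $\overline{\ST}_R$. Restricting it to $\overline{\ST}_R \smallsetminus 0$ and using observation (B) gives $\rho_{\lambda|_K} \iso \rho_g$, whence observation (A) supplies $h,h' \in G(R)$ with $h(\lambda|_K) = g h'$, i.e.\ $g = h (\lambda|_K)(h')^{-1}$, which is the required Cartan decomposition with $h_1 = h$ and $h_2 = (h')^{-1}$.

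The main obstacle is the converse direction, and within it the step that promotes agreement on the central gerbe $B_k\bG_m$ to agreement on all of $\overline{\ST}_R$. This is exactly the content of \Cref{L:full}, whose force rests on the coherent completeness of $\overline{\ST}_R$ along $B_k\bG_m$ together with Tannaka duality and the vanishing of the relevant obstruction groups coming from the smoothness of $G$. Without it one only recovers a compatible family of $2$-isomorphisms over the nilpotent thickenings $\cZ^{[n]}$, as in Claim~3 of \Cref{T:theorem1}, and it is precisely the completeness of $R$ that lets one pass to the limit. The secondary delicate point, flagged above, is the extraction of the cocharacter $\lambda_0$ from $\widetilde{\rho}|_{B_k\bG_m}$: this requires the $G_k$-torsor at the origin to be trivial so that the equivariant structure is an honest homomorphism $\bG_{m,k} \to G_k$ rather than a cocharacter of an inner twist, and I would treat this as the place where the argument is most sensitive to the residue field.
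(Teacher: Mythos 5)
Your overall strategy coincides with the paper's: both directions are reduced to the observations (A) and (B), and the heart of $(2)\Rightarrow(1)$ is the passage from a $2$-isomorphism over the residual gerbe $B_k\bG_m$ to one over all of $\overline{\ST}_R$ via \Cref{L:full}. The implication $(1)\Rightarrow(2)$ is handled identically to the paper and is correct.

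There is, however, one genuine gap in your $(2)\Rightarrow(1)$: the triviality of the $G_k$-torsor $\widetilde{\rho}|_0$ at the origin, which you need before you can speak of a cocharacter $\lambda_0\co\bG_{m,k}\to G_k$ rather than a cocharacter of the inner form $G'=\Aut_{G}(\widetilde{\rho}|_0)$. You correctly flag this as the sensitive point, but the fix you propose --- Hilbert 90 for an ambient $\GL_n$-bundle --- does not work: triviality of the induced $\GL_n$-torsor over $\Spec(k)$ says nothing about triviality of the $G$-torsor itself, since the map $H^1(k,G)\to H^1(k,\GL_n)$ can kill nontrivial classes (its ``kernel'' is identified with $\GL_n(k)\backslash(\GL_n/G)(k)$). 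Since the lemma makes no assumption on the residue field --- and this generality is exactly what \Cref{T:theorem2} requires --- this step cannot be waved away. The paper's argument instead exploits the global geometry of the extension: restrict the extended torsor to $\{s=0\}\cong[\bA^1_k/\bG_m]$ to obtain an equivariant $G$-torsor $\cP$ that is trivial at $t=1$ (because $\rho_g$ is built by gluing trivial torsors over the two copies of $\Spec(R)$); comparing $\cP$ with the pullback $\cP'$ of $\cP|_{B_k\bG_m}$ along $[\bA^1_k/\bG_m]\to B_k\bG_m$, the tautological identification over the origin spreads out to all of $[\bA^1_k/\bG_m]$ by smoothness of $\Isom(\cP,\cP')$ together with Tannaka duality, and evaluating the resulting isomorphism at $1$ shows that $\cP|_0$ is trivial. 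You should replace the Hilbert 90 remark with an argument of this kind (or else restrict the lemma to algebraically closed residue fields, where smoothness of $G$ already gives a section over $\Spec(k)$ --- but that would weaken \Cref{T:theorem2}).
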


\begin{proof}
We begin with making two observations:
\begin{enumerate}
\item[(A)] If $g, g' \in G(K)$ are elements, the morphisms $\rho_g, \rho_{g'} \co \overline{\ST}_R \smallsetminus 0 \to B_SG$ are 2-isomorphic if and only if there are elements $h,h' \in G(R)$ such that $hg = g' h'$.  
\item[(B)] If $\lambda \co \bG_{m,R} \to G_R$ is a one-parameter subgroup and $\widetilde{\lambda}$ denotes the composition $\widetilde{\lambda} \co \overline{\ST}_R \to B_R \bG_{m} \to B_S G$ where the latter map is induced by $\lambda$, then $\widetilde{\lambda}_{\overline{\ST}_R \smallsetminus 0}$ and $\rho_{g'}$, where $g' = \lambda|_K$, are 2-isomorphic.
\end{enumerate}
To see (1) $\implies$ (2), Observations (A) and (B) imply that $\widetilde{\lambda} \co \overline{\ST}_R \to B_R \bG_{m} \to B_SG$ is an extension of $\rho_g$.   

For the converse (2) $\implies$ (1), we may restrict an extension $\widetilde{\rho} \co \overline{\ST}_R \to B_SG$ to the residual gerbe of the closed point $0 \in \overline{\ST}_R$ to obtain a morphism $B_k \bG_m \to B_SG$, where $k$ is the residue field of $R$.  This yields a homomorphism $\bG_m \to G'$ where $G'$ is the inner form of $G$ defined by $\Aut_{B_S G}(k)$.  We claim that this inner form is trivial.  Indeed, we may restrict the $G$-torsor corresponding to $\widetilde{\rho}$ to $\{s=0\}$ to obtain a $G$-torsor $\cP$ on $[\bA^1_k / \bG_m]$ which is trivial over $1$.  Let $\cP'$ denote the $G$-torsor $[\bA^1_k / \bG_m] \to B_k \bG_m \to B_S G$. Since $\Isom(\cP, \cP') \to [\bA^1_k / \bG_m]$ is smooth and we have a section over $B \bG_m$, we may use Tannaka duality (\cite[Cor.~3.6]{ahr}) to lift this section to $[\bA^1_k / \bG_m]$.  This yields an isomorphism $\cP' \iso \cP$ which restricts over $1$ to an isomorphism  $\cP'|_1 \cong \cP|_1 \cong G$ so $G'$ is trivial.

Let $\lambda_0 \co \bG_{m,k} \to G_k$ be the corresponding one-parameter subgroup.  By \cite[Exp.~IX, Cor. 7.3]{sga3ii}, $\lambda_0$ lifts to a homomorphism $\lambda \co  \bG_{m,R} \to G_R$. 
Consider the composition $\widetilde{\lambda} \co \overline{\ST}_R \to B_R \bG_m \to B_SG$, where the latter map is induced by $\lambda$.  The restrictions $\widetilde{\rho}|_{B_k \bG_m}$ and $\widetilde{\lambda}|_{B_k \bG_m}$ are 2-isomorphic.  It follows from \Cref{L:full} that $\widetilde{\rho}$ and $\widetilde{\lambda}$ are 2-isomorphic and, in particular, $\rho_g$ and $\widetilde{\lambda}|_{\overline{\ST}_R \smallsetminus 0}$ are 2-isomorphic.   Observations (A) and (B) now imply that we may write $g$ as $g = h_1 \lambda|_K h_2$ where $h_1, h_2 \in G(R)$.
\end{proof}

\begin{proof}[Proof of \Cref{T:theorem2}]
\noindent (1)$ \implies$ (2):  As $B_S G^0 \to B_S G$ is a finite \'etale covering and S-completeness is local on the source under finite  \'etale coverings (\Cref{P:S-complete}\eqref{P:S-complete6}), we may assume that $G \to S$ is reductive.  
Since $S$-completeness is \'etale local (\Cref{P:S-complete}\eqref{P:S-complete5}) and reductive group schemes are \'etale-locally split, we may assume that $G$ embeds as a closed subgroup scheme of $\GL_{n,S}$.  As $G \to S$ is reductive, the quotient $\GL_{n,S}/G$ is affine or, in other words, $B_SG \to B_S\GL_n$ is affine.  As $B_S\GL_n$ is $S$-complete (\Cref{P:S-complete}\eqref{P:S-complete2}), $B_SG$ is also $S$-complete  (\Cref{P:S-complete}\eqref{P:S-complete1}).

\medskip

\noindent (2) $\implies$ (1):  Suppose that there is a geometric point $s \co \Spec(k) \to S$ such that $G_s^0$ is not reductive.    
Choose a normal subgroup $\bG_a \subset R_u(G_{s}^0)$ of the unipotent radical.  
As both $G_s^0/R_u(G_s^0)$ and $R_u(G_{s}^0)/\bG_a$ are affine, the morphism $B_k \bG_a \to B_kG$ is affine but this implies by \Cref{P:S-complete}\eqref{P:S-complete1} that $B_k \bG_a$ is S-complete which is a contradiction.  Since the fibers of $G^0 \to S$ are reductive,  \cite[Exp.~XVI, Thm. 5.2]{sga3ii} implies that $G^0 \to S$ is necessarily affine and thus a reductive group scheme.

To see that $G/G^0 \to S$ is finite, we first observe that $B_S (G/G^0) \to S$ is S-complete.  Indeed, by \Cref{P:S-complete}\eqref{P:S-complete4}, we only need to check that any diagram \eqref{E:S-complete}, where $R$ a complete DVR with algebraically closed residue field, can be filled in after an extension of DVRs.  A morphism $\overline{\ST}_R \smallsetminus 0 \to B_S(G/G^0)$ corresponds to an element of $g \in (G/G^0)(K)$ which after a finite extension of $K$ lifts to an element $\widetilde{g} \in G(K)$.  As $B_S G \to S$ is S-complete, we can extend the morphism $\overline{\ST}_R \smallsetminus 0 \to B_S G$ induced by $\widetilde{g}$ to a morphism $\overline{\ST}_R \to B_SG$ and the composition $\overline{\ST}_R \to B_SG \to B_S(G/G^0)$ yields the desired extension.  Finally, we appeal to  \Cref{P:S-complete}\eqref{P:S-complete3} to conclude that $B_S(G/G^0) \to S$ is separated which implies that $G/G^0 \to S$ is finite.
\medskip

\noindent (2) $\implies$ (3): This follows from \Cref{L:Iwahori-vs-S-complete}.

\medskip

\noindent (3) $\implies$ (2):  By \Cref{P:S-complete}\eqref{P:S-complete4}, we only need to show that for complete DVRs $R$ with algebraically closed residue field, any map $\rho \co \overline{\ST}_R \smallsetminus 0 \to B_SG$ extends to a map $\overline{\ST}_R \to BG$.  As $G$ is smooth, the restrictions $\rho|_{s \neq 0}, \rho|_{t \neq 0} \co \Spec(R) \to B_SG$ correspond to trivial $G$-torsors. Thus $\rho$ is 2-isomorphic to $\rho_g$ for an element $g \in G(K)$.  \Cref{L:Iwahori-vs-S-complete} now implies the existence of an extension. 
\end{proof}

\section{A Result for non-complete DVR's}\label{S:Non-Complete-DVR}

\Cref{T:theorem2} generalizes \Cref{T:theorem1} in that the assumption on the residue field of the complete DVR is dropped. Let us mention that although for a general non-complete DVR the Cartan decomposition may fail even for tori, it turns out that the only obstruction comes from the proof of \Cref{L:Iwahori-vs-S-complete} in which deformation theory was used to spread out a one-parameter subgroup over the residue field $\lambda_0 \co \bG_{m,k}\to G_k$ to $\lambda\co \bG_{m,R} \to G_R$.

\begin{theorem}\label{T:non-complete-R}
	Let $R$ be a DVR with fraction field $K$ and residue field $k$ and {$G$} a {reductive group scheme} over $R$. Suppose that a maximal split torus $S_k$ of the special fiber $G_k$ admits a lift to a split torus $S_R \subset {G}$. Then {$G$ has Cartan decompositions with respect to $R$.}
	\end{theorem}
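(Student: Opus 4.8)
The plan is to establish the decomposition over the completion $\hat{R}$ of $R$, where \Cref{T:theorem2} is available, and then descend to $R$. Write $\hat{K}$ for the fraction field of $\hat{R}$, whose residue field is again $k$. Since $G_{\hat{R}}$ is reductive over the complete DVR $\hat{R}$, the implication (1)$\Rightarrow$(3) of \Cref{T:theorem2} shows that $G$ has Cartan decompositions with respect to $\hat{R}$. Viewing $g \in G(K)$ as an element of $G(\hat{K})$, I would first write $g = \hat{h}_1 \hat{\lambda}|_{\hat{K}} \hat{h}_2$ with $\hat{h}_1, \hat{h}_2 \in G(\hat{R})$ and $\hat{\lambda} \co \bG_{m,\hat{R}} \to G_{\hat{R}}$ a one-parameter subgroup. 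The whole problem is then to replace this $\hat{R}$-datum by one defined over $R$.

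The first descent is for the cocharacter, and this is where the hypothesis enters. The torus $S_{\hat{R}} := (S_R)_{\hat{R}}$ is split and has special fibre the maximal split torus $S_k$ of $G_k$, so $S_{\hat{R}}$ is a maximal split torus of $G_{\hat{R}}$ (likewise $S_R$ is one of $G$). Using conjugacy of maximal split tori over the complete local ring $\hat{R}$ (\cite[Exp.~XXVI, Prop. 6.16]{sga3iii}, as in \Cref{R:iwahori}), I may conjugate $\hat{\lambda}$ by an element of $G(\hat{R})$ --- absorbing it into $\hat{h}_1, \hat{h}_2$ --- so that $\hat{\lambda}$ factors through $S_{\hat{R}}$. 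As $S_R$ is split, its cocharacter lattice agrees with that of $S_{\hat{R}}$, so $\hat{\lambda}$ is the base change of a one-parameter subgroup $\lambda \co \bG_{m,R} \to S_R \subset G$. Setting $\gamma := \lambda|_K \in G(K)$, the elements $g$ and $\gamma$ now lie in the same $G(\hat{R})$-double coset of $G(\hat{K})$.

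It remains to descend $\hat{h}_1, \hat{h}_2$, i.e. to show $g \in G(R)\gamma G(R)$; by Observation (A) in the proof of \Cref{L:Iwahori-vs-S-complete} this is equivalent to a $2$-isomorphism $\rho_g \cong \rho_\gamma$ over $\overline{\ST}_R \smallsetminus 0$. Here I would invoke Beauville--Laszlo gluing along the fracture square $R = \hat{R} \times_{\hat{K}} K$: a $G$-torsor on $\overline{\ST}_R \smallsetminus 0$, and an isomorphism between two such, is the same as the corresponding datum over $\overline{\ST}_{\hat{R}} \smallsetminus 0$ together with one over the generic fibre $\Spec K$, compatible over $\Spec \hat{K}$. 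Over $\Spec K$ both $\rho_g$ and $\rho_\gamma$ are canonically trivial, and over $\overline{\ST}_{\hat{R}} \smallsetminus 0$ they are isomorphic by the $\hat{R}$-decomposition; the sole remaining task is to match the two isomorphisms over $\Spec \hat{K}$. The isomorphism $\beta$ over $\overline{\ST}_{\hat{R}} \smallsetminus 0$ restricts on $\Spec \hat{K}$ to an element $\beta|_{\hat{K}} \in G(\hat{R})$, and I am free to modify $\beta$ by $\Aut(\rho_g|_{\overline{\ST}_{\hat{R}}\smallsetminus 0})$, whose image in $G(\hat{K})$ is the open subgroup $N := G(\hat{R}) \cap g\,G(\hat{R})\,g^{-1}$. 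Since $G(\hat{R}) \cap G(K) = G(R)$, the descent succeeds as soon as the open coset $\beta|_{\hat{K}}\,N \subset G(\hat{R})$ meets $G(R)$ --- that is, as soon as $G(R)$ is dense in $G(\hat{R})$.

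The main obstacle is therefore this density statement, and it is precisely here (together with the cocharacter descent) that the split torus is indispensable. Density is equivalent to surjectivity of $G(R) \to G(R/\pi^m)$ for every $m$. Using the split maximal torus $S_R$, the big cell of $G$ is an open subscheme isomorphic to $\prod_\alpha U_\alpha \times S_R$ with each root subgroup $U_\alpha \cong \AA^1_R$, so both $S_R(R) \to S_R(R/\pi^m)$ and $U_\alpha(R) \to U_\alpha(R/\pi^m)$ are surjective; granting that a split reductive group over the local ring $R/\pi^m$ is generated by its maximal torus and root subgroups, every element of $G(R/\pi^m)$ then lifts to $G(R)$. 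I expect the verification of this generation-and-lifting statement to be the technical heart of the argument --- the non-complete analogue of the coherent-completeness input (\Cref{L:full}) used for \Cref{L:Iwahori-vs-S-complete}, now supplied by the splitness of $S_R$ rather than by completeness of $R$. Granting it, Observations (A) and (B) produce $h_1, h_2 \in G(R)$ with $g = h_1 \lambda|_K h_2$, as required.
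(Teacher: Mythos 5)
Your first half --- completing to $\widehat{R}$, invoking \Cref{T:theorem2} there, and using conjugacy of maximal split tori to arrange that $\lambda$ factors through $S_{\widehat{R}}=(S_R)_{\widehat{R}}$ and hence is defined over $R$ --- matches the paper, and your reduction of the remaining descent to the statement that the open coset $\widehat{h}_1 N$ meets $G(R)$ (with $N = G(\widehat{R})\cap \gamma G(\widehat{R})\gamma^{-1}$ open) is correct. The gap is in the step you yourself flag as the ``technical heart'': you discharge this by asking that $G(R)$ be dense in $G(\widehat{R})$, i.e.\ that $G(R)\to G(R/\pi^m)$ be surjective for all $m$, and you justify that only via a big cell $\prod_\alpha U_\alpha\times S_R$ with $U_\alpha\cong \AA^1_R$ and generation of $G(R/\pi^m)$ by torus and root subgroups. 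That argument presupposes $G$ \emph{split}. The theorem only assumes that a maximal split torus of the special fiber lifts; the centralizer $M=Z_G(S_R)$ may be a nontrivial non-split reductive group (in the extreme case $S_k=1$ and $G_k$ anisotropic, your density claim becomes the assertion that $G(R)$ is dense in $G(\widehat{R})$ for an arbitrary anisotropic reductive $G$). Over a non-henselian DVR this is a weak-approximation statement at one place, which is genuinely delicate and not something your sketch addresses; so as written the proof does not cover the stated generality.

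The paper avoids needing any density statement for $G$ itself. It uses the covering of $G$ by the translates $P^+_\lambda\,U^-_\lambda\, w$ of the big cell attached to $\lambda$ (with $w$ chosen in $G(R)$ via representatives of the relative Weyl group of $S_R$, \Cref{R:RelativeWeylGroup}) to write $\widehat{h}_2=\widehat{p}\,w\,\widehat{u}^w$. The parabolic factor $\widehat{p}\in P^+_\lambda(\widehat{R})$ is moved past $\lambda|_K$ \emph{exactly}, since conjugation by $\lambda|_K$ preserves $P^+_\lambda(\widehat{R})$; approximation is needed only for $\widehat{u}\in U^-_\lambda(\widehat{R})$, and $U^-_\lambda$ is an affine space over $R$, where density of $R$-points in $\widehat{R}$-points is immediate. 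The error term, being congruent to $1$ modulo a high power of $\pi$ inside $U^-_\lambda$, stays integral after conjugation by $\lambda|_K$, and the remaining factor lies in $G(K)\cap G(\widehat{R})=G(R)$. If you want to salvage your route, you would either have to restrict to split $G$, or replace the global density claim by exactly this kind of cell-by-cell approximation --- at which point you have reproduced the paper's argument.
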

\begin{remark}
The hypothesis of the above result is clearly satisfied if either $G$ is a split group scheme or if the special fiber $G_k$ is anisotropic. 
It also holds if $R$ is a $k$-algebra and $G$ is a constant group scheme over $R$.
\end{remark}	
\begin{remark}\label{R:RelativeWeylGroup}
As all maximal split tori are conjugate over fields the assumption implies that any one-parameter subgroup of the special fiber $G_k$ is conjugate to the reduction of a one-parameter subgroup $\bG_{m,R}\to S_R\subset G$. 

Also as $R$ is a local ring we know that under this assumption the relative Weyl group $W_{G}(S_R)$ admits representatives $w\in \Norm_G(S_R)(R)\subset G(R)$ by \cite[Exp.~XXVI, \S 7.1]{sga3iii}.
\end{remark}
\begin{proof}
Let us denote by $\widehat{R}$ the completion of $R$ and by $\widehat{K}$ its fraction field. By \Cref{T:theorem2} we can write 
$$g = \widehat{h}_1 \lambda|_K \widehat{h}_2$$ with $\widehat{h}_i \in G(\widehat{R})$ and by our assumption we can choose $\lambda$ to be a one-parameter subgroup $\lambda \co \bG_{m,R} \to S_R \subset G$.

Our aim is to show that we can replace $\widehat{h}_i$ by elements of $G(R)$.  To achieve this let us denote by $P^+_\lambda \subset G$ the parabolic subgroup defined by $\lambda$ and by $U_\lambda^-$ the unipotent radical of the opposite parabolic.	 

Then by the Bruhat decomposition of $G_k$ and \Cref{R:RelativeWeylGroup} for some $w\in G(R)$ the element $\widehat{h}_2$ is an element of the open subset $P^+_\lambda w w^{-1}U^{-}_\lambda w \subset G$ and we can thus write $\widehat{h}_2 =\widehat{p} w \widehat{u}^w$ with $\widehat{p}\in P^+_\lambda(\widehat{R}), \widehat{u} \in U^-_\lambda(\widehat{R})$. As conjugation with $\lambda$ preserves $P_\lambda^+(R)$ we find
$$ g = \widehat{h}_1 \lambda|_K \widehat{h}_2 = \widehat{h}_1 \lambda|_K \widehat{p} \cdot w \cdot\widehat{u}^w =  \widehat{h}_1\cdot\widehat{p}^\prime\cdot \lambda|_K {\cdot w \cdot} \widehat{u}^w$$
for some $\widehat{p}' \in P_{\lambda}^+(\widehat{R})$.
Now $U^-_\lambda$ being the unipotent radical of a parabolic subgroup it is isomorphic as a scheme to an affine space over $R$ (\cite[Exp.~XXVI Cor.~2.5]{sga3iii}) and $\lambda$ acts linearly on this space.
Thus for any integer $n$ we can find $u\in U^-_{\lambda}(R)$ such that $\widehat{u}^w= \widehat{v}^w \cdot u^{w }$ and $\widehat{v}^w\equiv 1 \mod \pi^n$. For sufficiently large $n$ this implies that ${ \widehat{v}^\prime =}\lambda|_K w \widehat{v}^w w^{-1}  \lambda|_K^{-1} \in G(\widehat{R})\subset G(\widehat{K})$  so that we find $$ g= {\widehat{h}_1 \widehat{p}^\prime \widehat{v}^\prime }\lambda|_K wu^w = \widehat{h}_1^{\prime\prime}\lambda|_K wu^w $$
for some $\widehat{h}_1^{\prime\prime} \in G(\widehat{R})$.
This implies that $\widehat{h}_1^{\prime\prime} \in G(K)\cap  G(\widehat{R}) = G(R)$. Thus we found a decomposition over $R$. 
\end{proof}

\bibliographystyle{bibstyle}
\bibliography{refs-iwahori}

\end{document}